\documentclass[11pt,reqno]{amsart}
\usepackage{amsmath,amssymb}

 \makeatletter
 \oddsidemargin.9375in
 \evensidemargin  \oddsidemargin
 \marginparwidth1.9375in
 \makeatother

\textwidth 13cm \topmargin.0in \textheight 19cm
\pagestyle{myheadings}

 \markboth
 {$~$ \hfill \footnotesize {\rm M. Eshaghi Gordji and N. Ghobadipour} \hfill
 $~$}
 {$~$ \hfill \footnotesize {\rm Stability of $(\alpha,\beta,\gamma)-$derivations on Lie $C^*-$algebras
}  \hfill$~$}

\begin{document}
\thispagestyle{empty}
 \setcounter{page}{1}

\begin{center}
{\large\bf Stability of $(\alpha,\beta,\gamma)-$derivations on Lie
$C^*-$algebras

\vskip.20in

{\bf M. Eshaghi Gordji and N. Ghobadipour  } \\[2mm]

{\footnotesize Department of Mathematics,
Semnan University,\\ P. O. Box 35195-363, Semnan, Iran\\
[-1mm] E-mail: {\ madjid.eshaghi@gmail.com
\\ghobadipour.n@gmail.com}}}
\end{center}
\vskip 5mm \noindent{\footnotesize{\bf Abstract.} Petr Novotn\'y and
Ji\v r\'l Hrivn\'ak \cite{Nov} investigated generalize the concept
of Lie derivations via certain complex parameters and obtained
various Lie and Jordan operator algebras as well as two one-
parametric sets of linear operators. Moreover, they established the
structure and properties of $(\alpha,\beta,\gamma)-$derivations of
Lie algebras.\\ We say a functional equation $(\xi)$ is stable if
any function $g$ satisfying the equation $(\xi)$ {\it approximately}
is near to true solution of $(\xi).$ In the present paper, we
investigate the stability of $(\alpha,\beta,\gamma)-$derivations on
Lie $C^*$-algebras associated with the following functional equation
$$f(\frac{x_2-x_1}{3})+f(\frac{x_1-3 x_3}{3})+
f(\frac{3x_1+3x_3-x_2}{3})=f(x_1).$$
 }

\vskip.10in
 \footnotetext { 2000 Mathematics Subject Classification:17B05; 17B40; 46LXX;
 39B82;
 39B52; 39B72; 46K70.}
 \footnotetext { Keywords: $(\alpha,\beta,\gamma)-$derivation; Lie $C^*-$algebra; the generalized Hyers--Ulam--Rassias stability}

  \newtheorem{df}{Definition}[section]
  \newtheorem{rk}[df]{Remark}
   \newtheorem{lem}[df]{Lemma}
   \newtheorem{thm}[df]{Theorem}
   \newtheorem{pro}[df]{Proposition}
   \newtheorem{cor}[df]{Corollary}
   \newtheorem{ex}[df]{Example}

 \setcounter{section}{0}
 \numberwithin{equation}{section}

\vskip .2in
\begin{center}
\section{Introduction}
\end{center}
The theory of finite dimensional complex Lie algebras is an
important part of Lie theory. It has several applications to
physics and connections to other parts of mathematics. With an
increasing amount of theory and applications concerning Lie
algebras of various dimensions, it is becoming necessary to
ascertain applicable tools for handling them. The miscellaneous
characteristics of Lie algebras constitute such tools and have
also found applications: Casimir operators \cite{Abl}, derived,
lower central and upper central sequences, Lie algebra of
derivations, radical, nilradical, ideals, subalgebras
\cite{Jac,Ran} and recently megaideals \cite{Pop}. These
characteristics are particularly crucial when considering possible
affinities among Lie algebras. Physically motivated relations
between two Lie algebras, namely contractions and deformations,
have been extensively studied, see e.g. \cite{Ger,Mon}. When
investigating these kinds of relations in dimensions higher than
five, one can encounter insurmountable difficulties. Firstly,
aside the semisimple ones, Lie algebras are completely classified
only up to dimension 5 and the nilpotent ones up to dimension 6.
In higher dimensions, only special types, such as rigid Lie
algebras \cite{Goz} or Lie algebras with fixed structure of
nilradical, are only classified \cite{Sno} (for detailed survey of
classification results in lower dimensions see e.g. \cite{Pop} and
references therein). Secondly, if all available characteristics of
two results of contraction/deformation are the same then one
cannot distinguish them at all. This often occurs when the result
of a contraction is one–parametric or
more–parametric class of Lie algebras.\\
We say a functional equation $(\xi)$ is stable if any function $g$
satisfying the equation $(\xi)$ {\it approximately} is near to true
solution of $(\xi).$  We say that a functional equation is
 {\it superstable} if every approximately solution is an exact solution
 of it \cite{TRa3}.\\
The stability problem of functional equations originated from a
question of Ulam \cite{Ul} in 1940, concerning the stability of
group homomorphisms. Let $(G_1,.)$ be a group and let $(G_2,*)$ be a
metric group with the metric $d(.,.).$ Given $\epsilon >0$, does
there exist a $\delta> 0$, such that if a mapping
$h:G_1\longrightarrow G_2$ satisfies the inequality
$d(h(x.y),h(x)*h(y)) <\delta$ for all $x,y\in G_1$, then there
exists a homomorphism $H:G_1\longrightarrow G_2$ with
$d(h(x),H(x))<\epsilon$ for all $x\in G_1?$ In the other words,
under what condition does there exist a homomorphism near an
approximate homomorphism? The concept of stability for functional
equation arises when we replace the functional equation by an
inequality which acts as a perturbation of the equation. In 1941, D.
H. Hyers \cite{Hy1} gave the first affirmative  answer to the
question of Ulam for Banach spaces. Let $f:{E}\longrightarrow{E'}$
be a mapping between Banach spaces such that
$$\|f(x+y)-f(x)-f(y)\|\leq \delta $$
for all $x,y\in E,$ and for some $\delta>0.$ Then there exists a
unique additive mapping $T:{E}\longrightarrow{E'}$ such that
$$\|f(x)-T(x)\|\leq \delta$$
for all $x\in E.$ Moreover if $f(tx)$ is continuous in
$t\in\mathbb{R}$  for each fixed $x\in E,$ then $T$ is linear.
Finally in 1978, Th. M. Rassias \cite{TRa1} proved the following
theorem.

\begin{thm}\label{t1} Let $f:{E}\longrightarrow{E'}$ be a mapping from
 a normed vector space ${E}$
into a Banach space ${E'}$ subject to the inequality
$$\|f(x+y)-f(x)-f(y)\|\leq \epsilon (\|x\|^p+\|y\|^p) \eqno \hspace {0.5
 cm} (1.1)$$
for all $x,y\in E,$ where $\epsilon$ and p are constants with
$\epsilon>0$ and $p<1.$ Then there exists a unique additive
mapping $T:{E}\longrightarrow{E'}$ such that
$$\|f(x)-T(x)\|\leq \frac{2\epsilon}{2-2^p}\|x\|^p  \eqno \hspace {0.5
 cm}(1.2)$$ for all $x\in E.$
If $p<0$ then inequality $(1.1)$ holds for all $x,y\neq 0$, and
$(1.2)$ for $x\neq 0.$ Also, if the function $t\mapsto f(tx)$ from
$\Bbb R$ into $E'$ is continuous in real $t$ for each fixed $x\in
E,$ then $T$ is linear.
\end{thm}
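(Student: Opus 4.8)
The plan is to prove the classical Hyers–Ulam–Rassias theorem (Theorem~\ref{t1}) by the direct method of Hyers, adapting the iteration so that it yields the sharp bound $\frac{2\epsilon}{2-2^{p}}\|x\|^{p}$. First I would substitute $y=x$ into the basic inequality~(1.1) to obtain $\|f(2x)-2f(x)\|\le 2\epsilon\|x\|^{p}$, and after dividing by $2$ rewrite this as $\bigl\|\frac{f(2x)}{2}-f(x)\bigr\|\le\epsilon\|x\|^{p}$. This single estimate is the engine of the whole argument: it controls the discrepancy introduced by one doubling step.

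Next I would iterate. Replacing $x$ by $2^{k}x$ in the one-step estimate and dividing by $2^{k}$ gives $\bigl\|\frac{f(2^{k+1}x)}{2^{k+1}}-\frac{f(2^{k}x)}{2^{k}}\bigr\|\le \epsilon\, 2^{k(p-1)}\|x\|^{p}$. Summing these telescoping differences over $k=0,\dots,n-1$ and using the triangle inequality yields $\bigl\|\frac{f(2^{n}x)}{2^{n}}-f(x)\bigr\|\le \epsilon\|x\|^{p}\sum_{k=0}^{n-1}2^{k(p-1)}$. Since $p<1$ the ratio $2^{p-1}<1$, so the geometric series converges and is bounded by $\sum_{k=0}^{\infty}2^{k(p-1)}=\frac{1}{1-2^{p-1}}=\frac{2}{2-2^{p}}$; this is exactly where the constant in~(1.2) comes from. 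The same geometric-series tail estimate shows that the sequence $T_{n}(x):=\frac{f(2^{n}x)}{2^{n}}$ is Cauchy in $E'$, and because $E'$ is complete I may define $T(x):=\lim_{n\to\infty}T_{n}(x)$. Passing to the limit $n\to\infty$ in the partial-sum estimate gives the desired bound~(1.2).

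It then remains to verify that $T$ is additive, unique, and (under the continuity hypothesis) linear. Additivity follows by writing inequality~(1.1) with $x,y$ replaced by $2^{n}x,2^{n}y$, dividing by $2^{n}$, and letting $n\to\infty$: the right-hand side $\epsilon\,2^{n(p-1)}(\|x\|^{p}+\|y\|^{p})\to0$, so $T(x+y)=T(x)+T(y)$. For uniqueness I would suppose $T'$ is another additive map satisfying~(1.2); then $T,T'$ are both homogeneous over the integers, so $\|T(x)-T'(x)\|=2^{-n}\|T(2^{n}x)-T'(2^{n}x)\|\le 2^{-n}\cdot\frac{4\epsilon}{2-2^{p}}2^{np}\|x\|^{p}\to0$, forcing $T=T'$. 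For the continuity addendum, additivity already gives $\Q$-homogeneity of $T$, and the assumed continuity of $t\mapsto f(tx)$ (which transfers to $T$ through the limit) upgrades this to $\R$-homogeneity, hence linearity by a standard density argument. I would also note the two sign/domain caveats: when $p<0$ the point $0$ must be excluded because $\|x\|^{p}$ blows up there, so the estimates are stated for $x,y\neq0$.

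The main obstacle I anticipate is not any single step but getting the constant sharp: one must divide by $2$ at the right moment so that the geometric series has ratio $2^{p-1}$ rather than $2^{p}$, and keep careful track of which inequality is divided by which power of $2$. The convergence of the iteration hinges entirely on $p<1$ (equivalently $2^{p-1}<1$); if $p=1$ the series diverges and the method collapses, which is why the hypothesis is essential. Everything else — additivity, uniqueness, and the passage to linearity — is a routine limiting argument once the Cauchy estimate with the correct constant is in hand.
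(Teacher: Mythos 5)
The paper does not actually prove this theorem: it is quoted in the introduction as background, attributed to Th.~M.~Rassias's 1978 paper, so there is no in-paper argument to compare yours against. Your proposal is the standard direct (Hyers-type) proof of that result, and it is essentially correct: the substitution $y=x$, the telescoping estimate $\bigl\|2^{-n}f(2^{n}x)-f(x)\bigr\|\le\epsilon\|x\|^{p}\sum_{k=0}^{n-1}2^{k(p-1)}$, the identification of the constant $\sum_{k\ge0}2^{k(p-1)}=2/(2-2^{p})$, the Cauchy/completeness argument, the passage to the limit in (1.1) to get additivity, and the uniqueness argument via $T(2^{n}x)=2^{n}T(x)$ are all exactly right, as is your remark that $p<1$ is what makes the ratio $2^{p-1}<1$ and that $x=0$ must be excluded when $p<0$. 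The one place you gloss too quickly is the linearity step: continuity of $t\mapsto f(tx)$ does \emph{not} simply ``transfer to $T$ through the limit,'' since a pointwise limit of continuous functions need not be continuous. The correct completion is that $t\mapsto T(tx)=\lim_{n}2^{-n}f(2^{n}tx)$ is a pointwise limit of continuous functions, hence Borel measurable, and a measurable additive function from $\R$ into a Banach space is automatically continuous and hence $\R$-homogeneous; combined with $\Q$-homogeneity from additivity this yields linearity. With that standard lemma inserted, your argument is complete.
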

In 1991, Z. Gajda \cite{Gaj} answered the question for the case
$p>1$, which was raised by Rassias.  This new concept is known as
Hyers--Ulam--Rassias stability of functional equations. During the
last decades several stability problems of functional equations have
been investigated in the spirt of Hyers--Ulam--Rassias. See
\cite{Cz,Hy2,Jun,Rj3,Rj4,Rj5,Rj6,TRa1,TRa2,TRa3} for more detailed
information on stability of
functional equations.\\
Recently, the stability of various types of derivations has been
investigated by some mathematicians; see \cite{Bav,Par2,Par3,Par4,Par7,Par8}.\\
Petr Novotn\'y and Ji\v r\'l Hrivn\'ak \cite{Nov} investigated
generalize the concept of Lie derivations via certain complex
parameters and obtained various Lie and Jordan operator algebras
as well as two one- parametric sets of linear operators. Moreover,
they established the structure and properties of
$(\alpha,\beta,\gamma)-$derivations of Lie algebras.\\
A $C^*-$algebra $A,$ endowed with the Lie product $[x,y]=xy-yx$ on
$A,$ is called a Lie $C^*-$algebra (see
\cite{Par4,Par5,Par6}).\\
Let $A$ be a Lie $C^*-$algebra. A $\Bbb C-$linear mapping $d: A \to
A$ is a called a $(\alpha,\beta,\gamma)-$derivation of $A,$ if there
exist $\alpha,\beta,\gamma \in \Bbb C$ such that
$$\alpha d[x,y]=\beta [d(x),y]+\gamma [x,d(y)]$$ for all $x,y \in
A.$\\ In the present paper, we investigate the stability of
$(\alpha,\beta,\gamma)-$derivations on Lie $C^*-$algebras associated
with the following functional equation
$$f(\frac{x_2-x_1}{3})+f(\frac{x_1-3 x_3}{3})+
f(\frac{3x_1+3x_3-x_2}{3})=f(x_1).$$

\vskip 5mm

\section{Main results}
In this section, we investigate the superstability and the
Hyers--Ulam--Rassas stability of
$(\alpha,\beta,\gamma)-$derivations on Lie $C^*-$algebras.\\
Throughout this section, assume that $A$ is a Lie $C^*-$algebra with
norm $\|.\|_A.$ We need the following lemma in our main theorems.
\begin{lem}\label{t1}\cite{Par1}
Let $X$ and $Y$ be linear spaces and let $f: X \to Y$ be an additive
mapping such that $f(\mu x)=\mu f(x)$ for all $x \in X$ and all $\mu
\in \Bbb T^{1}:=\{\lambda \in \Bbb C ;~~~|\lambda|=1\}. $ Then the
mapping $f$ is $\Bbb C-$linear; i.e,
$$f(tx)=tf(x)$$ for all $x \in X$ and
all $t \in \Bbb C.$
\end{lem}
\begin{lem}\label{t2}
Let $f:A \to A$ be a mapping such that
$$\|f(\frac{x_2-x_1}{3})+f(\frac{x_1-3x_3}{3})+f(\frac{3x_1+3x_3-x_2}{3})\|_A \leq \|f(x_1)\|_A, \eqno(2.1)$$
for all $x_1,x_2,x_3 \in A.$ Then $f$ is additive.
\end{lem}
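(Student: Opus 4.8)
The plan is to extract exact additivity by feeding (2.1) substitutions that collapse the right-hand side to zero. First I would set $x_1=x_2=x_3=0$ in (2.1); since all three arguments then become $0$, this yields $\|3f(0)\|_A\le\|f(0)\|_A$, so that $3\|f(0)\|_A\le\|f(0)\|_A$ and hence $f(0)=0$.

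The key structural observation is that the three arguments appearing on the left of (2.1) always sum to $x_1$: indeed one checks $\frac{x_2-x_1}{3}+\frac{x_1-3x_3}{3}+\frac{3x_1+3x_3-x_2}{3}=x_1$. Consequently, for a fixed value of $x_1$ the pair $\bigl(\frac{x_2-x_1}{3},\frac{x_1-3x_3}{3}\bigr)$ ranges over all of $A\times A$ as $(x_2,x_3)$ does, while the third argument is forced to equal $x_1$ minus the other two. Thus (2.1) is equivalent to the assertion that $\|f(u)+f(v)+f(x_1-u-v)\|_A\le\|f(x_1)\|_A$ for all $x_1,u,v\in A$.

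With this reformulation the remainder is immediate. Taking $x_1=0$ and $v=0$ (realized by $x_2=3u$, $x_3=0$) forces $\|f(u)+f(-u)\|_A\le\|f(0)\|_A=0$, whence $f$ is odd. Taking instead $x_1=0$ with arbitrary $u,v$ (realized by $x_2=3u$, $x_3=-v$) forces $\|f(u)+f(v)+f(-u-v)\|_A\le\|f(0)\|_A=0$, so $f(u)+f(v)+f(-u-v)=0$; combining this with oddness gives $f(u+v)=f(u)+f(v)$ for all $u,v\in A$, i.e. $f$ is additive.

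There is no serious obstacle here: the only point requiring care is verifying that the substitution $x_1=0$, $x_2=3u$, $x_3=-v$ produces exactly the arguments $u$, $v$, and $-(u+v)$ on the left of (2.1), after which the vanishing of the right-hand side does all the work. The essential idea is that the inequality is self-improving, rather than an additive-error bound, precisely because its right-hand side can be driven to $0$ by the choice $x_1=0$.
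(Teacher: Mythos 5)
Your proof is correct and follows essentially the same route as the paper's: both arguments first force $f(0)=0$, then set $x_1=0$ to drive the right-hand side to zero, extract oddness, and obtain the Cauchy equation from $f(u)+f(v)+f(-(u+v))=0$; your reparametrization $(u,v)=(\frac{x_2}{3},\,x_3-\frac{x_2}{3})$ is exactly the paper's substitution $t_1,t_2$. You also correctly omit the paper's superfluous intermediate steps establishing $f(2x_3)=2f(x_3)$ and $f(3x_3)=3f(x_3)$, which are never used.
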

\begin{proof}
Letting $x_1=x_2=x_3=0$ in $(2.1),$ we get $$\|3f(0)\|_A \leq
\|f(0)\|_A.$$So $f(0)=0.$ Letting $x_1=x_2=0$ in $(2.1),$ we get
$$\|f(-x_3)+f(x_3)\|_A \leq \|f(0)\|_A=0$$ for all $x_3 \in A.$
Hence $f(-x_3)=-f(x_3)$ for all $x_3 \in A.$ Letting $x_1=0$ and
$x_2=6x_3$ in $(2.1),$ we get $$\|f(2x_3)-2f(x_3)\|_A \leq
\|f(0)\|_A=0$$ for all $x_3 \in A.$ Hence $$f(2x_3)=2f(x_3)$$ for
all $x_3 \in A.$ Letting $x_1=0$ and $x_2=9x_3$ in $(2.1),$ we get
$$\|f(3x_3)-f(x_3)-2f(x_3)\|_A \leq \|f(0)\|_A=0$$ for all $x_3 \in
A.$ Hence $$f(3x_3)=3f(x_3)$$ for all $x_3 \in A.$ Letting $x_1=0$
in $(2.1),$ we get
$$\|f(\frac{x_2}{3})+f(-x_3)+f(x_3-\frac{x_2}{3})\|_A \leq
\|f(0)\|_A=0$$ for all $x_2,x_3 \in A.$ So
$$f(\frac{x_2}{3})+f(-x_3)+f(x_3-\frac{x_2}{3})=0\eqno(2.2)$$ for all $x_2,x_3 \in
A.$ Let $t_1=x_3-\frac{x_2}{3}$ and $t_2=\frac{x_2}{3}$ in
$(2.2).$ Then $$f(t_2)-f(t_1+t_2)+f(t_1)=0$$ for all $t_1,t_2 \in
A$ and so $f$ is additive.
\end{proof}

Now we establish the superstability of
$(\alpha,\beta,\gamma)-$derivations as follows.
\begin{thm}\label{t2}
Let $p\neq 1$ and $\theta$ be nonnegative real numbers, and let
$f:A \to A$  be a mapping such that for some $\alpha,\beta,\gamma
\in \Bbb C$
$$\|f(\frac{x_2-x_1}{3})+f(\frac{x_1-3\mu x_3}{3})+\mu f(\frac{3x_1+3x_3-x_2}{3})\|_A \leq \|f(x_1)\|_A,\eqno(2.3)$$
$$\|\alpha f([x_1,x_2])-\beta[f(x_1),x_2]-\gamma [x_1,f(x_2)]\|_A \leq \theta (\|x_1\|_A^{2p}+\|x_2\|_A^{2p})\eqno(2.4)$$
for all $\mu \in \Bbb T^1:=\{\lambda \in \Bbb C~; |\lambda|=1\}$ and
all $x_1,x_2,x_3 \in A.$ Then the mapping $f:A \to A$ is a
$(\alpha,\beta,\gamma)-$derivation.
\end{thm}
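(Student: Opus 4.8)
The plan is to extract full $\mathbb{C}$-linearity of $f$ from (2.3) first, and then to promote the approximate identity (2.4) to an exact one by a homogeneity/scaling argument. To begin, I would set $\mu=1$ in (2.3); this reduces (2.3) to precisely the hypothesis (2.1) of the additivity lemma established above, so that $f$ is additive. In particular $f(0)=0$ and $f(-x)=-f(x)$ for all $x\in A$.

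Next I would recover $\mathbb{T}^1$-homogeneity by specializing (2.3) at $x_1=x_2=0$. The right-hand side becomes $\|f(0)\|_A=0$, while the left-hand side collapses to $\|f(-\mu x_3)+\mu f(x_3)\|_A$; hence $f(-\mu x_3)=-\mu f(x_3)$ for every $\mu\in\mathbb{T}^1$ and every $x_3\in A$. Combining this with oddness of $f$ gives $f(\mu x)=\mu f(x)$ for all $\mu\in\mathbb{T}^1$ and all $x\in A$. Since $f$ is additive and satisfies $f(\mu x)=\mu f(x)$ on $\mathbb{T}^1$, Lemma \ref{t1} applies and $f$ is $\mathbb{C}$-linear.

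It then remains to establish the exact derivation identity $\alpha f([x_1,x_2])=\beta[f(x_1),x_2]+\gamma[x_1,f(x_2)]$. Write $\Delta(x_1,x_2):=\alpha f([x_1,x_2])-\beta[f(x_1),x_2]-\gamma[x_1,f(x_2)]$. Using $\mathbb{C}$-linearity of $f$ and bilinearity of the bracket, one checks $\Delta(nx_1,nx_2)=n^2\Delta(x_1,x_2)$ for every $n\in\mathbb{N}$, whereas the bound in (2.4) scales as $n^{2p}$. Substituting $nx_1,nx_2$ into (2.4) and dividing by $n^2$ gives $\|\Delta(x_1,x_2)\|_A\le n^{2p-2}\,\theta(\|x_1\|_A^{2p}+\|x_2\|_A^{2p})$. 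When $p<1$ the exponent $2p-2$ is negative, so letting $n\to\infty$ forces $\Delta(x_1,x_2)=0$. When $p>1$ I would instead substitute $x_1/n,x_2/n$, obtaining $\|\Delta(x_1,x_2)\|_A\le n^{2-2p}\,\theta(\|x_1\|_A^{2p}+\|x_2\|_A^{2p})\to 0$. In both regimes $\Delta\equiv 0$, which together with $\mathbb{C}$-linearity shows that $f$ is an $(\alpha,\beta,\gamma)$-derivation.

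The computations above are routine once linearity is secured; the only point requiring care is the split into the two regimes $p<1$ and $p>1$ and the corresponding choice of scaling direction. The hypothesis $p\ne 1$ is exactly what makes one of the exponents $2p-2$ or $2-2p$ strictly negative, guaranteeing that the perturbation term vanishes in the limit --- this is the mechanism behind superstability, as opposed to mere Hyers--Ulam--Rassias stability. I expect no genuine obstacle beyond this bookkeeping.
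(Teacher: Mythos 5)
Your proposal is correct and follows essentially the same route as the paper: $\mu=1$ in (2.3) gives additivity via Lemma 2.2, the substitution $x_1=x_2=0$ gives $\mathbb{T}^1$-homogeneity and hence $\mathbb{C}$-linearity via Lemma 2.1, and a scaling argument in (2.4) (the paper uses powers $2^n$ where you use $n$, an immaterial difference) kills the perturbation term in both regimes $p>1$ and $p<1$. Your write-up is in fact slightly more complete, since the paper only treats $p>1$ explicitly and dismisses $p<1$ with ``similarly.''
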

\begin{proof}
Assume $p > 1.$\\
Let $\mu=1$ in $(2.3).$ By Lemma 2.2, the mapping $f:A \to A$ is
additive. Letting $x_1=x_2=0$ in $(2.3),$ we get $$\|f(-\mu x_3)+\mu
f(x_3)\|_B \leq \|f(0)\|_B=0$$ for all $x_3\in A$ and $\mu \in \Bbb
T^1.$ So $$-f(\mu x_3)+\mu f(x_3)=f(-\mu x_3)+\mu f(x_3)=0$$ for all
$x_3\in A$ and all $\mu \in \Bbb T^1.$ Hence $f(\mu x_3)=\mu f(x_3)$
for all $x_3 \in A$ and all $\mu \in \Bbb T^1.$ By Lemma $2.1,$ the
mapping $f:A \to A$ is $\Bbb C-$linear. Since $f$ is additive, then
It follows from $(2.4)$ that
\begin{align*}
&\|\alpha f([x_1,x_2])-\beta[f(x_1),x_2]-\gamma [x_1,f(x_2)]\|_A\\
&=\lim_{n \to \infty} 4^n
\|\alpha f(\frac{[x_1,x_2]}{2^n.2^n})-\beta[f(\frac{x_1}{2^n}),\frac{x_2}{2^n}]-\gamma [\frac{x_1}{2^n},f(\frac{x_2}{2^n})]\|_A\\
&\leq \lim_{n \to \infty}\frac{4^n
\theta}{4^{np}}(\|x_1\|_A^{2p}+\|x_2\|_A^{2p})\\
&=0
\end{align*}
for all $x_1,x_2 \in A.$ Thus for some $\alpha, \beta,\gamma \in
\Bbb C$
$$\alpha f([x_1,x_2])=\beta[f(x_1),x_2]+\gamma[x_1,f(x_2)]$$ for all $x_1,x_2 \in
A.$ Hence the mapping $f:A \to A$ is a
$(\alpha,\beta,\gamma)-$derivation. Similarly, one obtains the
result for the case $p<1.$
\end{proof}
Now, we establish  the Hyers--Ulam--Rassias
 Stability of $(\alpha,\beta,\gamma)-$derivations on Lie $C^*-$algebras.
\begin{thm}
Let $p>1$ and $\theta$ be nonnegative real numbers, and let $f:A
\to A$ with $f(0)=0$ be a mapping such that for some
$\alpha,\beta,\gamma \in \Bbb C$
\begin{align*}
&\|f(\frac{x_2-x_1}{3})+f(\frac{x_1-3\mu x_3}{3})+\mu
f(\frac{3x_1+3x_3-x_2}{3})-f(x_1)\|_A \\
&\leq \theta(\|x_1\|_A^p+\|x_2\|_A^p+\|x_3\|_A^p)\hspace{7
cm}(2.5)
\end{align*}
and
$$\|\alpha f([x_1,x_2])-\beta[f(x_1),x_2]-\gamma[x_1,f(x_2)]\|_A\\
\leq \theta (\|x_1\|_A^{2p}+\|x_2\|_A^{2p})\eqno(2.6)$$ for all $\mu
\in \Bbb T^1$ and all $x_1,x_2,x_3 \in A.$ Then there exists a
unique $(\alpha,\beta,\gamma)-$derivation $d:A \to A$ such that
$$\|d(x_1)-f(x_1)\|_A \leq \frac{\theta(1+2^p)\|x_1\|_A^p}{1-3^{1-p}}\eqno(2.7)$$
for all $x_1 \in A.$
\end{thm}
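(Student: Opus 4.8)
The plan is to run the direct (Hyers) method, extracting a single engine inequality from which everything follows. Setting $\mu=1$ in $(2.5)$ and substituting $x_1=x$, $x_2=2x$, $x_3=0$, the three arguments $\tfrac{x_2-x_1}{3}$, $\tfrac{x_1-3x_3}{3}$, $\tfrac{3x_1+3x_3-x_2}{3}$ all collapse to $\tfrac{x}{3}$, while the right-hand side becomes $\theta(\|x\|_A^p+\|2x\|_A^p)=\theta(1+2^p)\|x\|_A^p$. This yields at once
$$\|3f(\tfrac{x}{3})-f(x)\|_A\leq\theta(1+2^p)\|x\|_A^p \qquad (x\in A).$$
Everything else is a consequence of this estimate together with the hypothesis $p>1$.

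First I would build the candidate derivation. Put $a_n(x)=3^n f(x/3^n)$. Replacing $x$ by $x/3^n$ in the engine inequality and multiplying by $3^n$ gives $\|a_{n+1}(x)-a_n(x)\|_A\leq\theta(1+2^p)\,3^{n(1-p)}\|x\|_A^p$. Since $p>1$ forces $3^{1-p}<1$, the series $\sum_n 3^{n(1-p)}$ converges, so $\{a_n(x)\}$ is Cauchy; completeness of the $C^*$-algebra $A$ lets me define $d(x)=\lim_{n\to\infty}3^n f(x/3^n)$. Summing the geometric series bounds $\|a_N(x)-f(x)\|_A$ by $\theta(1+2^p)\|x\|_A^p\sum_{n=0}^{N-1}3^{n(1-p)}$, and letting $N\to\infty$ produces exactly the estimate $(2.7)$.

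Next I would verify that $d$ is a $\mathbb{C}$-linear $(\alpha,\beta,\gamma)$-derivation. Replacing each $x_i$ by $x_i/3^n$ in $(2.5)$, multiplying by $3^n$ and letting $n\to\infty$ kills the right-hand side (again because $p>1$), so $d$ satisfies the exact functional equation underlying Lemma 2.2 and is therefore additive. Taking $x_1=x_2=0$ in the same limit gives $d(\mu x_3)=\mu d(x_3)$ for every $\mu\in\mathbb{T}^1$, whence $d$ is $\mathbb{C}$-linear by Lemma 2.1. For the bracket identity I would argue exactly as in the proof of Theorem 2.3: replace $x_1,x_2$ by $x_1/2^n,x_2/2^n$ in $(2.6)$, multiply by $4^n$, and pass to the limit, using bilinearity of the Lie bracket to pull the scalars inside. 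The right-hand side $\theta\,4^{n(1-p)}(\|x_1\|_A^{2p}+\|x_2\|_A^{2p})$ tends to $0$, giving $\alpha d([x_1,x_2])=\beta[d(x_1),x_2]+\gamma[x_1,d(x_2)]$.

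The point that needs genuine care, and which I expect to be the main obstacle, is that the derivation step uses base-$2$ and base-$4$ rescalings whereas $d$ was constructed with base-$3$ rescaling, so I must check that $\lim_n 2^n f(x/2^n)=d(x)$ and $\lim_n 4^n f(x/4^n)=d(x)$. This follows from $(2.7)$ together with the integer homogeneity of the additive map $d$: since $d(y)=k^n d(y/k^n)$ for $k\in\{2,4\}$, one has $\|k^n f(x/k^n)-d(x)\|_A=k^n\|f(x/k^n)-d(x/k^n)\|_A\leq\frac{\theta(1+2^p)}{1-3^{1-p}}\,k^{n(1-p)}\|x\|_A^p\to0$. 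The same device settles uniqueness: if $d'$ is another $(\alpha,\beta,\gamma)$-derivation obeying $(2.7)$, then additivity gives $\|d(x)-d'(x)\|_A=3^n\|d(x/3^n)-d'(x/3^n)\|_A\leq 2\cdot\frac{\theta(1+2^p)}{1-3^{1-p}}3^{n(1-p)}\|x\|_A^p\to0$, so $d=d'$.
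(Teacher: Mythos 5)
Your proposal is correct and follows essentially the same route as the paper: the same substitution $\mu=1$, $x_2=2x_1$, $x_3=0$ yielding $\|3f(x_1/3)-f(x_1)\|_A\leq\theta(1+2^p)\|x_1\|_A^p$, the same Cauchy-sequence construction $d(x)=\lim_n 3^nf(x/3^n)$ with the geometric-series bound $(2.7)$, the same passage to the exact functional equation to invoke Lemmas 2.2 and 2.1, and the same uniqueness argument. The one point where you diverge --- using base-$2$/base-$4$ rescaling in the bracket step, which forces your (correct) reconciliation that $k^nf(x/k^n)\to d(x)$ via $(2.7)$ and integer homogeneity of $d$ --- is avoided in the paper, which rescales $(2.6)$ by $3^n$ and $9^n$ so that the needed limits are subsequences of the defining sequence for $d$.
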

\begin{proof}
Let us assume $\mu=1,$ $x_2=2x_1$ and $x_3=0$ in $(2.5).$ Then we
get
$$\|3f(\frac{x_1}{3})-f(x_1)\|_A \leq
\theta(1+2^p)\|x_1\|_A^p\eqno(2.8)$$ for all $x_1 \in A.$ So by
induction, we have
$$\|3^nf(\frac{x_1}{3^n})-f(x_1)\|_A \leq \theta(1+2^p)\|x_1\|_A^p \sum_{i=0}^{n-1}3^{i(1-p)}\eqno(2.9)$$
for all $x_1 \in A.$ Hence
\begin{align*}
\|3^{n+m}f(\frac{x_1}{3^{n+m}})-3^mf(\frac{x_1}{3^m})\|_A
&\leq \theta (1+2^p)\|x_1\|_A^p \sum_{i=0}^{n-1}3^{(i+m)(1-p)}\\
&\leq \theta (1+2^p)
\|x_1\|_A^p\sum_{i=m}^{n+m-1}3^{i(1-p)}\hspace{1.7 cm} (2.10)
\end{align*}
for all nonnegative integer $m$ and all $x_1 \in A.$ From this it
follows that the sequence $\{ 3^n f(\frac{x_1}{3^n})\}$ is a
Cauchy sequence for all $x_1 \in A.$ Since $A$ is complete, the
sequence $\{3^n f(\frac{x_1}{3^n})\}$ converges. Thus one can
define the mapping $d:A \to A$ by $$d(x_1):=\lim_{n \to \infty}
3^n f(\frac{x_1}{3^n})$$ for all $x_1 \in A.$ Moreover, letting
$m=0$ and passing the limit $n \to \infty$ in $(2.10),$ we get
$(2.7).$ It follows from $(2.5)$ that
\begin{align*}
&\|d(\frac{x_2-x_1}{3})+d(\frac{x_1-3\mu x_3}{3})+\mu
d(\frac{3x_1+3x_3-x_2}{3})-d(x_1)\|_A\\
&=\lim_{n \to \infty}
3^n\|f(\frac{x_2-x_1}{3^{n+1}})+f(\frac{x_1-3\mu
x_3}{3^{n+1}})+\mu f(\frac{3x_1+3x_3-x_2}{3^{n+1}})-f(\frac{x_1}{3^n})\|_A\\
&\leq \lim_{n \to
\infty}\frac{3^n\theta}{3^{np}}(\|x_1\|_A^p+\|x_2\|_A^p+\|x_3\|_A^p)=0
\end{align*}
for all $\mu \in \Bbb T^1$ and all $x_1,x_2,x_3 \in A.$ So
$$d(\frac{x_2-x_1}{3})+d(\frac{x_1-3\mu x_3}{3})+\mu
d(\frac{3x_1+3x_3-x_2}{3})=d(x_1)$$ for all $\mu \in \Bbb T^1$ and
all $x_1,x_2,x_3 \in A.$ By Lemma $2.2,$ the mapping
$d:A \to A$ is additive. Hence by Lemma $2.1,$ $d$ is $\Bbb C-$linear.\\
It follows from $(2.6)$ that
\begin{align*}
&\|\alpha d([x_1,x_2])-\beta[d(x_1),x_2]-\gamma[x_1,d(x_2)]\|_A\\
&=\lim_{n \to \infty}9^n\|\alpha
f\frac{[x_1,x_2]}{3^n.3^n}-\beta[f(\frac{x_1}{3^n}),\frac{x_2}{3^n}]-\gamma[\frac{x_1}{3^n},f(\frac{x_2}{3^n})]
\|_A\\
&\leq \lim_{n \to
\infty}\frac{9^n\theta}{9^{np}}(\|x_1\|_A^{2p}+\|x_2\|_A^{2p})=0
\end{align*}
for all $x_1,x_2 \in A.$ So for some $\alpha,\beta,\gamma \in A$
$$\alpha d([x_1,x_2])=\beta[d(x_1),x_2]+\gamma[x_1,d(x_2)]$$
for all $x_1,x_2 \in A.$\\
So, $d$ is a $(\alpha,\beta,\gamma)-$derivation. Now, let $d^{'}:A
\to A$ be another  $(\alpha,\beta,\gamma)-$derivation satisfying
$(2.7).$ Then we have
\begin{align*}
&\|d(x_1)-d^{'}(x_1)\|_A=3^n
\|d(\frac{x_1}{3^n})-d^{'}(\frac{x_1}{3^n})\|_A\\
&\leq 3^n
(\|d(\frac{x_1}{3^n})-f(\frac{x_1}{3^n})\|_A+\|d^{'}(\frac{x_1}{3^n})-f(\frac{x_1}{3^n})\|_A)\\
&\leq \frac{2.3^n \theta(1+2^p)}{3^{np}(1-3^{1-p})}\|x\|_A^p~,
\end{align*}
which tends to zero as $n \to \infty$ for all $x_1 \in A.$ So we
can conclude that $d(x_1)=d^{'}(x_1)$ for all $x_1 \in A.$ This
proves the uniqueness of $d.$\\
Thus the mapping $d:A \to A$ is a unique
$(\alpha,\beta,\gamma)-$derivation satisfying $(2.7).$
\end{proof}
\begin{thm}
Let $p<1$ and $\theta$ be nonnegative real numbers, and let $f:A \to
A$ with $f(0)=0$ be a mapping  satisfying $(2.5)$ and $(2.6).$ Then
there exists a unique $(\alpha,\beta,\gamma)-$derivation $d:A \to A$
such that
$$\|d(x_1)-f(x_1)\|_A \leq \frac{\theta(1+2^p)\|x_1\|_A^p}{3^{1-p}-1}$$
for all $x_1 \in A.$
\end{thm}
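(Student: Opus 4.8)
The plan is to mirror the proof of the preceding theorem, but since now $p<1$ we must iterate in the \emph{expanding} direction rather than the contracting one. First I would specialize $(2.5)$ exactly as before---set $\mu=1$, $x_2=2x_1$, $x_3=0$---to recover the basic estimate $(2.8)$, namely $\|3f(\frac{x_1}{3})-f(x_1)\|_A\le\theta(1+2^p)\|x_1\|_A^p$. Replacing $x_1$ by $3x_1$ and dividing by $3$ converts this into a contraction estimate for the sequence $g_n(x_1):=3^{-n}f(3^nx_1)$: one finds $\|g_{n+1}(x_1)-g_n(x_1)\|_A\le\theta(1+2^p)3^{(n+1)(p-1)}\|x_1\|_A^p$. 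Because $p<1$ gives $3^{p-1}<1$, the telescoping sum over these differences is dominated by a convergent geometric series, so $\{g_n(x_1)\}$ is Cauchy; completeness of $A$ then lets me define $d(x_1):=\lim_{n\to\infty}3^{-n}f(3^nx_1)$.

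For the error bound I would sum the differences starting from $n=0$, obtaining $\|d(x_1)-f(x_1)\|_A\le\theta(1+2^p)\|x_1\|_A^p\sum_{j\ge1}3^{j(p-1)}$, and the identity $\sum_{j\ge1}3^{j(p-1)}=\frac{1}{3^{1-p}-1}$ yields exactly the claimed estimate with the new denominator $3^{1-p}-1$. Next, to see that $d$ solves the functional equation I would apply $(2.5)$ with each $x_i$ replaced by $3^nx_i$ and multiply through by $3^{-n}$; the right-hand side then carries a factor $3^{n(p-1)}\to 0$, so the defining equation of Lemma $2.2$ holds for $d$, whence $d$ is additive. Setting $x_1=x_2=0$ in that same equation gives $d(\mu x_3)=\mu d(x_3)$ for all $\mu\in\Bbb T^1$, so Lemma $2.1$ makes $d$ be $\Bbb C$-linear.

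For the derivation identity I would exploit that, along the subsequence $m=2n$, one has $d([x_1,x_2])=\lim_n 9^{-n}f([3^nx_1,3^nx_2])$, while $3^{-n}[f(3^nx_1),x_2]\to[d(x_1),x_2]$ and similarly for the last term; inserting these into $(2.6)$ with $x_1,x_2$ replaced by $3^nx_1,3^nx_2$ leaves a factor $9^{-n}3^{2np}=3^{2n(p-1)}\to 0$, so $\alpha d([x_1,x_2])=\beta[d(x_1),x_2]+\gamma[x_1,d(x_2)]$. Uniqueness follows in the usual way: if $d'$ is another such derivation obeying the bound, then by linearity $\|d(x_1)-d'(x_1)\|_A=3^{-n}\|d(3^nx_1)-d'(3^nx_1)\|_A$, which the triangle inequality bounds by $\frac{2\theta(1+2^p)3^{n(p-1)}}{3^{1-p}-1}\|x_1\|_A^p\to 0$.

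The routine part is the bookkeeping of exponents; the one place to stay alert is the direction of iteration---here the regularizing sequence is the expanding one $3^{-n}f(3^nx_1)$, and the main point is simply that $3^{p-1}<1$ for $p<1$ makes every geometric series in the argument converge, exactly as $3^{1-p}<1$ did in the previous theorem. I expect no genuinely new obstacle beyond reversing this scaling.
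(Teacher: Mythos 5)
Your proposal is correct and follows essentially the same route as the paper: the same specialization of $(2.5)$ to get the basic estimate, the same rescaled sequence $3^{-n}f(3^n x_1)$ with the geometric series $\sum_{j\ge 1}3^{j(p-1)}=\frac{1}{3^{1-p}-1}$, and the same limiting arguments for the functional equation, the derivation identity, and uniqueness (which the paper simply defers to "similar to Theorem 2.4"). No gaps.
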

\begin{proof}
Let us assume $\mu=1,$ $x_2=2x_1$ and $x_3=0$ in $(2.5).$ Then we
get
$$\|3f(\frac{x_1}{3})-f(x_1)\|_A \leq
\theta(1+2^p)\|x_1\|_A^p\eqno(2.11)$$ for all $x_1 \in A.$
Replacing $x_1$ by $3x_1$ in $(2.11)$ we have
$$\|3f(x_1)-f(3x_1)\|_A \leq
\theta(3^p)(1+2^p)\|x_1\|_A^p\eqno(2.12)$$ for all $x_1 \in A.$ It
follows from $(2.12)$ that $$\|f(x_1)-3^{-1}f(3x_1)\|_A \leq
\theta(3^{p-1})(1+2^p)\|x_1\|_A^p\eqno(2.13)$$ for all $x_1 \in
A.$ So by induction, we have
$$\|f(x_1)-3^{-n}f(3^nx_1)\|_A \leq \theta(1+2^p)\|x_1\|_A^p \sum_{i=1}^{n}3^{i(p-1)}\eqno(2.14)$$
for all $x_1 \in A.$ Hence
\begin{align*}
\|3^{-m}f(3^mx_1)-3^{-(n+m)}f(3^{n+m})\|_A
&\leq \theta (1+2^p)\|x_1\|_A^p \sum_{i=1}^{n}3^{(i+m)(p-1)}\\
&\leq \theta (1+2^p)
\|x_1\|_A^p\sum_{i=m+1}^{n+m}3^{i(p-1)}\hspace{1. cm} (2.15)
\end{align*}
for all nonnegative integer $m$ and all $x_1 \in A.$ From this it
follows that the sequence $\{ 3^{-n} f(3^nx_1)\}$ is a Cauchy
sequence for all $x_1 \in A.$ Since $A$ is complete, the sequence
$\{3^{-n} f(3^nx_1)\}$ converges. Thus one can define the mapping
$d:A \to A$ by $$d(x_1):=\lim_{n \to \infty} 3^{-n} f(3^nx_1)$$
for all $x_1 \in A.$ Moreover, letting $m=0$ and passing the limit
$n \to \infty$ in $(2.15),$ we get $$\|d(x_1)-f(x_1)\|_A \leq
\frac{\theta(1+2^p)\|x_1\|_A^p}{3^{1-p}-1}$$ for all $x_1 \in A.$
It follows from $(2.5)$ that
\begin{align*}
&\|d(\frac{x_2-x_1}{3})+d(\frac{x_1-3\mu x_3}{3})+\mu
d(\frac{3x_1+3x_3-x_2}{3})-d(x_1)\|_A\\
&=\lim_{n \to \infty}
3^{-n}\|f(\frac{3^n(x_2-x_1)}{3})+f(\frac{3^n(x_1-3\mu
x_3)}{3})\\
&+\mu f(\frac{3^n(3x_1+3x_3-x_2)}{3})-f(\frac{3^n(x_1)}{3})\|_A\\
&\leq \lim_{n \to \infty}\theta.
3^{n(p-1)}(\|x_1\|_A^p+\|x_2\|_A^p+\|x_3\|_A^p)=0
\end{align*}
for all $\mu \in \Bbb T^1$ and all $x_1,x_2,x_3 \in A.$ So
$$d(\frac{x_2-x_1}{3})+d(\frac{x_1-3\mu x_3}{3})+\mu
d(\frac{3x_1+3x_3-x_2}{3})=d(x_1)$$ for all $\mu \in \Bbb T^1$ and
all $x_1,x_2,x_3 \in A.$ By Lemma $2.2,$ the mapping
$d:A \to A$ is additive. Hence by Lemma $2.1,$ $d$ is $\Bbb C-$linear.\\
The rest of the proof is similar to the proofs of Theorem $2.4$
\end{proof}
\begin{cor}
Let $\theta$ be a nonnegative real number. Let $f:A \to A$ with
$f(0)=0$ be a mapping such that for some $\alpha,\beta,\gamma \in
\Bbb C$
$$\|f(\frac{x_2-x_1}{3})+f(\frac{x_1-3\mu x_3}{3})+\mu
f(\frac{3x_1+3x_3-x_2}{3})-f(x_1)\|_A \\
\leq \theta$$ and
$$\|\alpha f([x_1,x_2])-\beta[f(x_1),x_2]-\gamma[x_1,f(x_2)]\|_A\\
\leq \theta$$ for all $\mu \in \Bbb T^1$ and all $x_1,x_2,x_3 \in
A.$ Then there exists a unique $(\alpha,\beta,\gamma)-$derivation
$d:A \to A$ such that
$$\|d(x_1)-f(x_1)\|_A \leq \theta$$
for all $x_1 \in A.$
\end{cor}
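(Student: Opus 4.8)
The plan is to recognize this corollary as the $p=0$ instance of Theorem 2.5 and to mirror that proof, taking care to run the iteration in the direction that converges under a \emph{constant} bound. First I would substitute $\mu=1$, $x_2=2x_1$ and $x_3=0$ into the first inequality; the three arguments then all collapse to $x_1/3$, so the inequality reduces to
$$\|3f(\frac{x_1}{3})-f(x_1)\|_A\leq\theta$$
for all $x_1\in A$. Replacing $x_1$ by $3x_1$ and dividing by $3$ yields $\|f(x_1)-3^{-1}f(3x_1)\|_A\leq\theta/3$.

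Here the only real obstacle appears: with a constant bound one cannot run the contraction $3^nf(x_1/3^n)$ used in Theorem 2.4, since the resulting geometric sum $\sum_i 3^i$ diverges. Instead I would iterate in the direction of Theorem 2.5, forming the sequence $3^{-n}f(3^nx_1)$, for which the telescoping estimate
$$\|f(x_1)-3^{-n}f(3^nx_1)\|_A\leq\frac{\theta}{3}\sum_{i=0}^{n-1}3^{-i}\leq\theta$$
stays bounded. This shows $\{3^{-n}f(3^nx_1)\}$ is Cauchy, hence convergent by completeness of $A$, so I can define $d(x_1):=\lim_{n\to\infty}3^{-n}f(3^nx_1)$ and pass to the limit to obtain the asserted bound $\|d(x_1)-f(x_1)\|_A\leq\theta$.

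Next I would verify that $d$ inherits the algebraic structure. Applying the first inequality at the scaled points $(3^nx_1,3^nx_2,3^nx_3)$ and multiplying by $3^{-n}$, the perturbation is bounded by $3^{-n}\theta\to0$, so in the limit $d$ satisfies the functional equation exactly for every $\mu\in\Bbb T^1$; in particular $\|d(\frac{x_2-x_1}{3})+d(\frac{x_1-3x_3}{3})+d(\frac{3x_1+3x_3-x_2}{3})\|_A\le\|d(x_1)\|_A$, so Lemma 2.2 gives additivity, and taking $x_1=x_2=0$ to extract $d(\mu x_3)=\mu d(x_3)$, Lemma 2.1 upgrades this to $\Bbb C$-linearity. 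For the derivation identity I would apply the second inequality at $(3^nx_1,3^nx_2)$ and multiply by $9^{-n}$; since $[3^nx_1,3^nx_2]=9^n[x_1,x_2]$, the left side converges to $\|\alpha d([x_1,x_2])-\beta[d(x_1),x_2]-\gamma[x_1,d(x_2)]\|_A$ while the right side is $9^{-n}\theta\to0$, forcing $\alpha d([x_1,x_2])=\beta[d(x_1),x_2]+\gamma[x_1,d(x_2)]$. Finally, uniqueness follows exactly as in Theorem 2.4: if $d'$ is another such derivation obeying the bound, then $\Bbb C$-linearity gives $\|d(x_1)-d'(x_1)\|_A=3^{-n}\|d(3^nx_1)-d'(3^nx_1)\|_A\le 2\cdot3^{-n}\theta$ via the triangle inequality and the estimate for each, which tends to $0$. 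I expect no genuinely hard step beyond correctly selecting the growing iteration $3^{-n}f(3^nx_1)$ so that the constant bound still yields convergence.
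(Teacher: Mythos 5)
Your proposal is correct and follows essentially the same route as the paper: the corollary is the $p=0$ instance of Theorem 2.5, and your argument is precisely that theorem's proof (the "growing" iteration $3^{-n}f(3^nx_1)$, the scaled functional inequality, Lemmas 2.2 and 2.1, the $9^{-n}$-scaled bracket estimate, and the standard uniqueness argument) specialized to a constant bound. Your telescoped estimate even sharpens the approximation constant to $\theta/2$, which of course still yields the stated bound $\theta$.
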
 {\small

}

\begin{thebibliography}{99}


\bibitem{Abl} L. Abellanas and L. Alonso, A general setting for Casimir invariants,
{\it J. Math. Phys.} 16 (1975) 1580–1584.

\bibitem{Bav} M. Bavand Savadkouhi, M. Eshaghi Gordji, J. M.
Rassias, and N. Ghobadipour,  Approximate ternary Jordan derivations
on Banach ternary algebras, {\it Jounal Of Mathematical Physics.} 50
(2009) 1-9.

\bibitem{Cz} S. Czerwik, Functional equations and inequalities in
several variables, {\it World Scientific,} River Edge, NJ, 2002.

\bibitem{Gaj} Z. Gajda, On stability of additive mappings,
{\it Internat. J. Math. Math. Sci.} 14(1991) 431-434.

\bibitem{Ger} M. Gerstenhaber, Deformations of algebras, {\it Ann. of Math.} 79 (1964) 59;
{\it Ann. of Math.} 84 (1966) 1; {\it Ann. of Math.} 88 (1968) 1.

\bibitem{Goz} M. Goze and J. Bermudez, On the classification of rigid Lie algebras, {\it J. Algebra.} 245 (2001) 68–91.

\bibitem{Hy1} D. H. Hyers, On the stability of the linear
functional equation, {\it Proc. Natl. Acad. Sci.} 27 (1941)
222-224.

\bibitem{Hy2} D. H. Hyers, G. Isac and  Th. M. Rassias, Stability of
Functional Equations in Several Variables, {\it Birkhauser,
Basel,} 1998.

\bibitem{Jac} N. Jacobson, Lie Algebras, Dover, {\it New York,} 1979.

\bibitem{Jun} S. M. Jun, Hyers--Ulam--Rassias stability of
functional equations in mathematical analysis, {\it Hadronic Press
Lnc. Palm Harbor, Florida,} 2001.

\bibitem{Mon} M. de Montigny and J. Patera, Discrete and continuous graded contractions of Lie algebras and superalgebras,
{\it J. Phys.} A 24 (1991) 525–547.

\bibitem{Nov} Petr Novotn\'y and Ji\v r\'l Hrivn\'ak, On
$(\alpha,\beta,\gamma)-$derivations of Lie algebras and
corresponding invariant functions, {\it Journal of Geometry and
Physics.} 58 (2008) 208-217.

\bibitem{Pop} R. Popovych, V. Boyko, M. Nesterenko and M. Lutfullin, Realizations of real low-dimensioal Lie algebras,
{\it J. Phys.} A 36 (2003) 7337–7360.

\bibitem{Par1} C. Park, Homomorphisms between Poisson J$C^*-$algebras, {\it Bull. Braz. Math. Soc.} 36 (2005) 79-97.

\bibitem{Par2} C. Park and  A. Najati, Homomorphisms and derivations in $C^{\ast
}$-algebras, {\it Abstr. Appl. Anal.} (2007).

\bibitem{Par3} C. Park, J. S. An, and J. Cui, Isomorphisms and
derivations in Lie $C^*-$algebras, {\it Abstract and Applied
Analysis.} (2007) 1-14.

\bibitem{Par4} C. Park, Lie $*-$homomorphisms between Lie $C^*-$algebras and Lie $*-$derivations on Lie $C^*-$algebras, {\it J.
Math. Anal. Appl.} 293 (2004) 419-434.

\bibitem{Par5} C. Park, Homomorphisms between Lie J$C^*-$algebras
and Cauchy--Rassias stability of Lie J$C^*-$algebra derivations,
{\it Journal of Lie Theory.} 15 (2005) 393-414.

\bibitem{Par6} C. Park, J. C. Hou, and S. Q. Oh, Homomorphisms between Lie J$C^*-$algebras Lie $C^*-$algebra, {\it Acta Mathematica Sinica.} 21
(2005) 1391-1398.

\bibitem{Par7} C. Park, Linear derivations on $C^*-$algebras,
{\it Rev. Bull. Calcutta Math. Soc.} 11 (2003) 83-88.

\bibitem{Par8} C. Park, Linear $*-$derivations on J$B^*-$algebras,
{\it Acta Math. Sci. Ser. B~~Engl. Ed.} 25 (2005) 449-454.
\bibitem {Ran} D. Rand, P. Winternitz and H. Zassenhaus, On the identification of Lie algebra given by its
structure constants I. Direct decompositions, Levi decompositions
and nilradicals, {\it Linear Algebra Appl.} 109 (1988) 197–246.


\bibitem{Rj3} J. M. Rassias, Solution of a problem of Ulam, {\it J.
Approx. Theory} 57 (1989), no. 3, 268-273.

\bibitem{Rj4} J. M. Rassias, On a new approximation of approximately linear
mappings by linear mappings, {\it Discuss. Math.} 7 (1985),
193-196.

\bibitem{Rj5} J. M. Rassias, On approximation of approximately linear mappings
by linear mappings, {\it Bull. Sci. Math.} (2) 108 (1984), no. 4,
445-446.

\bibitem{Rj6} J. M. Rassias, On approximation of approximately linear mappings
by linear mappings, {\it J.Funct. Anal.} 46 (1982), no. 1,
126-130.

\bibitem {TRa1} Th. M. Rassias, On the stability of the linear
mapping in Banach spaces, {\it Proc. Amer. Math. Soc.} 72 (1978)
297-300.

\bibitem {TRa2}  Th. M. Rassias, The problem of S.M.Ulam for
approximately multiplicative mappings, {\it J. Math. Anal. Appl.}
246(2)(2000) 352-378.

\bibitem {TRa3} Th. M. Rassias, On the stability of functional
equations and a problem of Ulam, {\it Acta Math. Appl.} 62 (2000)
23-130.

\bibitem {Sno} L. Snobl and P. Winternitz, A class of solvable Lie algebras and their Casimir invariants,
{\it J. Phys.} A 31 (2005) 2687–2700.

\bibitem {Ul} S. M. Ulam, Problems in Modern Mathematics, {\it Chapter
VI, science ed. Wiley, New York,} 1940.


\end{thebibliography}
\end{document}